\documentclass{amsart}
\usepackage[utf8]{inputenc}
\usepackage{amsfonts}
\usepackage{amsmath}
\usepackage{amsthm}
\usepackage{float}
\usepackage{graphicx}
\usepackage{subfigure}
\usepackage[pagewise]{lineno}\nolinenumbers

\usepackage{witharrows}
\usepackage{mathtools}

\newtheorem{thm}{Theorem}
\newtheorem{prp}[thm]{Proposition} 
\newtheorem{ex}[thm]{Example}
\newtheorem{rem}[thm]{Remark}
\newtheorem{lem}[thm]{Lemma}
\newtheorem{defn}[thm]{Definition}
\newtheorem{cor}[thm]{Corollary}

\newtheorem*{thmLL}{Theorem (Li--Lu)}
\newtheorem*{thmA}{Theorem A} 
\newtheorem*{thmB}{Theorem B}

\title[Periodicity and rotation number]{Periodicity and rotation number for random circle homeomorphisms}
\author{Zixu Li}
\address{School of Mathematics, Sun Yat-sen University, Zhuhai, China}
\email{zixu.li915@gmail.com}
\thanks{This work was partially supported by fund PGRS2006028.}
\author{Simon Lloyd}
\address{School of Mathematics and Physics, Xi'an Jiaotong-Liverpool University, China}
\email{Simon.Lloyd@xjtlu.edu.cn}
\date{\today}
\subjclass[2020]{
Primary 37E10; % Dynamical systems involving maps of the circle
Secondary 37H12, % Random iteration
37E45} % Rotation numbers and vectors
\keywords{Random circle homeomorphism, random periodic cycle}

\begin{document}

\begin{abstract}
We study discrete-time random dynamical systems where each fibre map is an orientation-preserving homeomorphism of the circle. We prove that the existence of a random periodic cycle with period at least two implies that the random rotation number is rational almost surely. Moreover, in clear contrast with the deterministic setting, we demonstrate that a common fixed point for the fibre maps does not imply that the random rotation number is an integer. Conversely, we show that if the mean random rotation number is an integer, then the fibre maps have a fixed point with positive probability.
\end{abstract}

\maketitle

\section{Introduction}

The notion of rotation number is an important topological invariant in the study of the dynamics of orientation-preserving circle homeomorphisms. Introduced by Poincar\'e, it is a measure of the average amount of rotation per iteration along an orbit. A key property of the  rotation number is that it can be used to characterise the existence of periodic points: the rotation number is a rational number if and only if there is a periodic orbit. In particular, fixed points exist if and only if the rotation number is an integer. See \cite{deFariaGuarino2024}, \cite{KatokHasselblatt1995} or \cite{deMelovanStrien1993} for an overview of the classical theory.
In this article, we consider the connections between periodicity and rotation number in the context of discrete-time random dynamical systems on the circle, and highlight how the theory differs from the deterministic setting of the dynamics of a single homeomorphism.

Let $\mathbb{S}^1$ denote the circle, parametrised by $x\in [0,1)$, and let $\mathcal{H}$ denote the collection of orientation-preserving homeomorphisms of $\mathbb{S}^1$, with the uniform topology and Borel sigma-algebra. 
Given a probability space consisting of a set $\Omega$, a sigma-algebra $\mathcal{F}$ of measurable subsets and a probability measure $\mathbb{P}$, let 
$(\Omega,\mathcal{F},\mathbb{P},\sigma)$ denote the \emph{measure-preserving dynamical system} for which $\sigma:\Omega\to\Omega$ is a measurable transformation that preserves $\mathbb{P}$: that is, $\mathbb{P}(\sigma^{-1}(A))=\mathbb{P}(A)$ for each $A\in\mathcal{F}$. 
A \emph{random circle homeomorphism (RCH) over $(\Omega,\mathcal{F},\mathbb{P},\sigma)$} is a measurable map $\Phi:\mathbb{N}_0\times\Omega\times \mathbb{S}^1\to \mathbb{S}^1$ with the following properties for each $n,m\in \mathbb{N}_0=\{0,1,2,\ldots\}$, $\omega\in\Omega$ and $x\in \mathbb{S}^1$:
\begin{enumerate}
\item $\Phi(0,\omega,\cdot)=\mathrm{Id}_{\mathbb{S}^1}$;
\item $\Phi(n,\omega,\cdot)\in \mathcal{H}$;
\item $\Phi(m+n,\omega,x)=\Phi(n,\sigma^m\omega,\Phi(m,\omega,x))$.
\end{enumerate}
The measure-preserving dynamical system $(\Omega,\mathcal{F},\mathbb{P},\sigma)$ is called the \emph{base dynamics} of $\Phi$, and we write $\sigma\omega$ for $\sigma(\omega)$. If we denote $\Phi(n,\omega,\cdot)$ by $f^{(n)}_\omega$ and $\Phi(1,\omega,\cdot)$ by $f_\omega$, the \emph{fibre map} at $\omega\in\Omega$, then
\begin{align}\label{eqn:fnomega}
\Phi(n,\omega,\cdot)=f^{(n)}_\omega=f_{\sigma^{n-1}\omega}\circ \cdots \circ f_{\sigma\omega}\circ f_{\omega}.
\end{align}

Let $\mathcal{H}(\Omega)$ denote the collection of measurable maps $f:\Omega\to \mathcal{H}$, where $\omega\mapsto f_\omega$. We say $f\in \mathcal{H}(\Omega)$ is the \emph{generator} of the RCH $\Phi:\mathbb{N}_0\times\Omega\times \mathbb{S}^1\to \mathbb{S}^1$ over $(\Omega,\mathcal{F},\mathbb{P},\sigma)$ defined by (\ref{eqn:fnomega}). We shall refer to the RCH $\Phi$ by the pair $(\sigma,f)$ used to define it.

It is often convenient to lift the dynamics of circle maps to the real line. Let $\tilde{\mathcal{H}}$ denote the collection of continuous and strictly increasing functions $F:\mathbb{R}\to\mathbb{R}$ that have the \emph{degree 1 property}: that is, for all $x\in\mathbb{R}$,
\begin{align}\label{eqn:degree1}
F(x+1)=F(x)+1.
\end{align}
We endow $\tilde{\mathcal{H}}$ with the uniform topology and Borel sigma-algebra. Given a single orientation-preserving homeomorphism $f\in \mathcal{H}$ of the circle, a \emph{lift} is any function $F\in\tilde{\mathcal{H}}$ that satisfies $\pi\circ F=f\circ \pi$, where $\pi:\mathbb{R}\to\mathbb{S}^1$ is the projection map $\pi(x)=x-\lfloor x\rfloor=x\ \mathrm{mod}\:1$. Here $\lfloor\cdot\rfloor:\mathbb{R}\to\mathbb{Z}$ denotes the floor function, defined by $\lfloor x\rfloor=\max\{n\in\mathbb{Z}:n\leq x\}$. Any two such lifts differ by an integer. 

In a similar way, we can lift a RCH. Given the generator $f:\Omega\to \mathcal{H}$ of a RCH, we say a measurable function $F:\Omega\to \tilde{\mathcal{H}}$ is a \emph{random lift} of $f$ if $F_\omega$ is a lift of the fibre map $f_\omega$ for every $\omega\in\Omega$, and we denote the composition $F_{\sigma^{n-1}\omega}\circ\cdots\circ F_\omega$ by $F^{(n)}_\omega$. 
Denote by $\tilde{\mathcal{H}}(\Omega)$ the collection of measurable maps $F:\Omega\to \tilde{\mathcal{H}}$ that satisfy the integrability condition
\begin{align}\label{eqn:integrabilityDev}
\int_{\omega\in\Omega} \|F_\omega-\mathrm{Id}\|\:\mathrm{d}\mathbb{P}(\omega) < \infty,
\end{align}
where $\|\cdot\|$ denotes the uniform norm, and we identify elements of $\tilde{\mathcal{H}}(\Omega)$ that are equal $\mathbb{P}$-almost everywhere. 

The concept of rotation number was generalised by Herman \cite{Herman1983} to the setting of quasiperiodically-forced circle homeomorphisms: that is, a RCH for which the base dynamics is an irrational rotation of the circle and the dependence $\omega\mapsto f_\omega$ is continuous. The extension of rotation number to random circle homeomorphisms was made by Ruffino \cite{Ruffino2000} for random lifts $F$ satisfying $F_\omega(0)\in (-1/2,1/2]$ for all $\omega\in\Omega$. Using integrability condition (\ref{eqn:integrabilityDev}) for the random lifts, Li and Lu \cite{LiLu2008} demonstrated the existence of $\rho_F$, the \emph{random rotation number} of $F\in\tilde{\mathcal{H}}(\Omega)$, as an integrable function:

\begin{thmLL}
If $F\in \tilde{\mathcal{H}}(\Omega)$, then there exists $\rho_F\in L^1(\Omega)$ such that for each $x\in\mathbb{R}$
\begin{align}\label{eqn:randomrotationnumber}
\lim_{n\to\infty} \frac{F^{(n)}_\omega(x)-x}{n} = \rho_F(\omega)
\end{align}
for $\mathbb{P}$-almost every $\omega\in\Omega$. The function $\rho_F$ satisfies $\rho_F\circ \sigma=\rho_F$, and thus is essentially constant if $\sigma$ is ergodic.
Moreover, $\rho: \tilde{\mathcal{H}}(\Omega)\to L^1(\Omega)$ is continuous.
\end{thmLL}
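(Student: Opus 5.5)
The plan is to reduce the statement to Kingman's subadditive ergodic theorem, applied to the displacement functions of the random lift. For $n\in\mathbb{N}$ and $\omega\in\Omega$ set
\begin{align*}
a_n(\omega)=\sup_{x\in\mathbb{R}}\bigl(F^{(n)}_\omega(x)-x\bigr),\qquad b_n(\omega)=\inf_{x\in\mathbb{R}}\bigl(F^{(n)}_\omega(x)-x\bigr).
\end{align*}
By the degree 1 property (\ref{eqn:degree1}), $x\mapsto F^{(n)}_\omega(x)-x$ is $1$-periodic and continuous. Hence the sup and inf are attained on $[0,1]$ and may be taken over rationals, which gives measurability. Writing $F^{(n+m)}_\omega(x)-x=\bigl(F^{(m)}_{\sigma^n\omega}(y)-y\bigr)+\bigl(F^{(n)}_\omega(x)-x\bigr)$ with $y=F^{(n)}_\omega(x)$ shows that $a_{n+m}\le a_n+a_m\circ\sigma^n$ (subadditive) and $b_{n+m}\ge b_n+b_m\circ\sigma^n$ (superadditive). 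Integrability (\ref{eqn:integrabilityDev}) gives $|a_1|,|b_1|\le\|F_\omega-\mathrm{Id}\|\in L^1$. Kingman's theorem therefore yields a.s. limits $\bar a=\lim a_n/n$ and $\underline b=\lim b_n/n$ in $L^1$, both $\sigma$-invariant.

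The next step is to show $\bar a=\underline b$ a.s. The standard circle fact is that $a_n-b_n<1$. This holds because $F^{(n)}_\omega$ is increasing and of degree one: for $x,y\in[0,1]$ with $x\le y<x+1$ we have $F(x)\le F(y)<F(x)+1$, so the displacements at $x$ and $y$ differ by less than $1$. Dividing by $n$ gives $\bar a=\underline b=:\rho_F$. Since $b_n\le F^{(n)}_\omega(x)-x\le a_n$ for every $x$, the limit (\ref{eqn:randomrotationnumber}) holds for all $x$ simultaneously on a single full-measure set, and $\rho_F\in L^1$. Invariance $\rho_F\circ\sigma=\rho_F$ comes either from Kingman directly or from $F^{(n+1)}_\omega(x)=F^{(n)}_{\sigma\omega}(F_\omega(x))$. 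Essential constancy under ergodicity is then immediate.

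For continuity, I take $F,G\in\tilde{\mathcal{H}}(\Omega)$ and aim to show $\|\rho_F-\rho_G\|_{L^1}\le\|\,\|F-G\|\,\|_{L^1}$ (or an analogous bound), which gives continuity in the natural metric $d(F,G)=\int\|F_\omega-G_\omega\|\,\mathrm{d}\mathbb{P}$ on $\tilde{\mathcal{H}}(\Omega)$. The route is through the integrals. By Kingman, $\int\rho_F\,\mathrm{d}\mathbb{P}=\inf_n\frac1n\int a_n^F$. Telescoping with monotonicity shows $a^F_n\le a^G_n+\sum_{k<n}\|F_{\sigma^k\omega}-G_{\sigma^k\omega}\|$: compare orbits step by step, using that $G$ is increasing to push the error forward, which works because sup-type quantities behave well under composition with increasing maps. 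Taking expectations and using that $\sigma$ preserves $\mathbb{P}$ controls $|\int\rho_F-\int\rho_G|$.

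Controlling the $L^1$ norm rather than just the integral is the main obstacle when $\sigma$ is not ergodic. For that, I would apply the same pathwise inequality and Birkhoff's theorem to the function $\omega\mapsto\|F_\omega-G_\omega\|$. This gives $|\rho_F-\rho_G|\le\mathbb{E}\bigl[\|F-G\|\,\big|\,\mathcal{I}\bigr]$ a.s., where $\mathcal{I}$ is the invariant $\sigma$-algebra. Integrating this bound yields the required $L^1$ estimate.
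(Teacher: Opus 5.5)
The paper gives no proof of this statement; it quotes it from Li and Lu. So your argument has to stand on its own.

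\textbf{First half: correct.} Here $a_n$ and $-b_n$ are subadditive with $|a_1|,|b_1|\le\|F_\omega-\mathrm{Id}\|$, so Kingman's theorem applies. The circle estimate $a_n-b_n<1$ then forces the two limits to coincide. This gives convergence for all $x$ on a single full-measure set, together with $\rho_F\in L^1$ and $\rho_F\circ\sigma=\rho_F$.

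\textbf{Continuity half: it rests on a false inequality.} To obtain $a^F_n\le a^G_n+\sum_{k<n}\|F_{\sigma^k\omega}-G_{\sigma^k\omega}\|$ by pushing errors forward, you need $G(z+c)\le G(z)+c$ for $c>0$, i.e.\ that $G$ is $1$-Lipschitz. An increasing degree-one map need not be. Monotonicity plus degree one only gives $G(z+c)\le G(z)+\lceil c\rceil$, which is useless for small $c$.

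In fact no bound of the form $\|\rho_F-\rho_G\|_{L^1}\le C\int\|F_\omega-G_\omega\|\,\mathrm{d}\mathbb{P}$ can hold, even when $\Omega$ is a point. For $0<\epsilon<1/2$, let $G\in\tilde{\mathcal{H}}$ be piecewise linear on $[1/4,5/4]$ through the points
\[
(1/4,\,3/4-\epsilon),\quad (3/4-\epsilon/2,\,3/4-\epsilon/2),\quad (3/4,\,5/4-\epsilon),\quad (5/4,\,7/4-\epsilon),
\]
extended by $G(x+1)=G(x)+1$. Then $G$ fixes $3/4-\epsilon/2$, so $\rho_G=0$. The map $F=G+\epsilon$ satisfies $F(1/4)=3/4$ and $F(3/4)=5/4$, so $F^{(2)}(1/4)=5/4$ and $\rho_F=1/2$, although $\|F-G\|=\epsilon$. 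Your pathwise inequality visibly fails here: $a^F_{2m}\ge m$, while $a^G_{2m}<1$. With it fall both your integral estimate and the bound $|\rho_F-\rho_G|\le\mathbb{E}[\|F-G\|\mid\mathcal{I}]$.

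\textbf{What is missing.} The map $\rho$ is continuous but not Lipschitz. Continuity has to come from a finite-time approximation whose error is uniform in $F$, and your own estimate $a_N-b_N<1$ supplies exactly this.

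Iterating sub- and superadditivity along multiples of $N$ gives
\[
\sum_{j<k}\bigl(a^F_N\circ\sigma^{jN}-1\bigr)\le a^F_{kN}\le\sum_{j<k}a^F_N\circ\sigma^{jN}.
\]
Let $\mathcal{I}_N$ be the $\sigma^N$-invariant $\sigma$-algebra and $E^F_N=\mathbb{E}[a^F_N\mid\mathcal{I}_N]$. Birkhoff's theorem for $\sigma^N$ then yields $\frac1N(E^F_N-1)\le\rho_F\le\frac1N E^F_N$ almost surely. Hence
\[
\|\rho_F-\rho_G\|_{L^1}\le\frac1N\bigl(1+\|a^F_N-a^G_N\|_{L^1}\bigr).
\]

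For fixed $N$, note that $|a^F_N-a^G_N|\le\|F^{(N)}_\omega-G^{(N)}_\omega\|$. This quantity tends to $0$ in $L^1$ as $F\to G$:
\begin{itemize}
\item along an almost surely convergent subsequence it tends to $0$ pointwise, by uniform continuity of each $G_{\sigma^k\omega}$;
\item it is uniformly integrable, being dominated by $\sum_{k<N}\bigl(\|F_{\sigma^k\omega}-\mathrm{Id}\|+\|G_{\sigma^k\omega}-\mathrm{Id}\|\bigr)$, so Vitali's theorem applies.
\end{itemize}
Choosing $N$ first and then $F$ close to $G$ proves continuity. The resulting modulus of continuity necessarily depends on $G$.
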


We denote by $\rho(F)$ the \emph{mean random rotation number}, which is given by
\begin{align}
\rho(F)=\int_{\omega\in\Omega} \rho_F(\omega)\:\mathrm{d}\mathbb{P}(\omega).
\end{align}
In particular, if $\sigma$ is ergodic, then $\rho_F(\omega)=\rho(F)$ for $\mathbb{P}$-almost every $\omega\in\Omega$. Zmarrou and Homburg \cite{ZmarrouHomburg2008} studied certain one-parameter families of smooth RCHs, and provided conditions for the smooth dependence of $\rho(F)$ on the parameter.

Ruffino and Rodrigues \cite{RodriguesRuffino2013} investigated the dependence of the random rotation number on the choice of random lift. A notable choice is the random lift $F\in\tilde{\mathcal{H}}(\Omega)$ of $f\in \mathcal{H}(\Omega)$ that satisfies the condition
\begin{align}\label{eqn:stdlift}
F_\omega(0)\in [0,1), 
\end{align}
which is called \emph{the standard random lift}. We will work exclusively with the standard random lift throughout. In the same article, Ruffino and Rodrigues observed that, unlike in the deterministic setting, rationality of the random rotation number does not imply the existence of periodic solutions. 

The concepts of fixed points and periodic points have been adapted for random dynamical systems. Random fixed points have been studied by Arnold \cite{Arnold1995} and by Arnold and Schmalfuss \cite{ArnoldSchmalfuss1996}. In the context of random dynamical systems on the circle, a random fixed point is a measurable function $a:\Omega\to\mathbb{S}^1$ for which $f_\omega(a(\omega))=a(\sigma\omega)$ for $\mathbb{P}$-almost every $\omega\in\Omega$. This is analogous to the notion of an invariant graph for quasi-periodically forced systems (see, for example \cite{Keller1996}).
The concept of a random periodic cycle was introduced by Kl\"unger \cite{Klunger2001} in the context of random dynamical systems on the real line. For quasiperiodically-forced circle homeomorphisms, J\"ager and Keller \cite{JagerKeller2006} introduced $p,q$-invariant graphs (and related structures) in their development of a Denjoy theory for such systems.

The main results of this article describe relationships between the random rotation number of a RCH and periodicity in the dynamics. To the best of our knowledge, these are the first results that establish a link between these concepts in the general random setting. In Proposition \ref{prp:ranpqcycle} we show that for random circle homeomorphisms, every random periodic cycle is a random $(p,q)$-periodic cycle (see Definition \ref{defn:rpc}). The existence of a random periodic cycle of period at least two determines the random rotation number:

\begin{thmA}
Let $F\in\tilde{\mathcal{H}}(\Omega)$ be the standard random lift of the generator $f\in \mathcal{H}(\Omega)$ of a random circle homeomorphism over a measure-preserving dynamical system $(\Omega,\mathcal{F},\mathbb{P},\sigma)$. If there is a random $(p,q)$-periodic cycle for $(\sigma, f)$ with $1\leq p<q$, then for $\mathbb{P}$-almost every $\omega\in\Omega$, the random rotation number $\rho_F\in L^1(\Omega)$ satisfies
\begin{align}
\rho_F(\omega) = \frac{p}{q}.
\end{align}
\end{thmA}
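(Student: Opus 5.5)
The plan is to reduce the statement to a telescoping identity along the lifted cycle. I will read Definition~\ref{defn:rpc}, together with Proposition~\ref{prp:ranpqcycle}, as providing $q$ measurable functions $a_0,\dots,a_{q-1}:\Omega\to\mathbb{S}^1$. For a.e.\ $\omega$ these are pairwise distinct and cyclically ordered, and they satisfy $f_\omega(a_i(\omega))=a_{i+p \bmod q}(\sigma\omega)$. The first step is to lift them in a measurable way. I label them so that $0\le A_0(\omega)<A_1(\omega)<\dots<A_{q-1}(\omega)<1$ with $\pi(A_i(\omega))=a_i(\omega)$. I then extend to all indices by $A_{i+kq}(\omega)=A_i(\omega)+k$, which gives a strictly increasing bi-infinite sequence $(A_j(\omega))_{j\in\mathbb{Z}}$ whose projection is the cycle.

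The second step uses monotonicity and the degree 1 property (\ref{eqn:degree1}) of $F_\omega$. Since $F_\omega$ is strictly increasing, it maps the increasing sequence $(A_j(\omega))_j$ bijectively onto a set of lifts of the cycle at $\sigma\omega$. That set is exactly $\{A_j(\sigma\omega)\}_j$, and order is preserved. Hence there is a single integer $s(\omega)$ such that
\begin{align*}
F_\omega(A_j(\omega))=A_{j+s(\omega)}(\sigma\omega)\quad\text{for all } j\in\mathbb{Z},
\end{align*}
and $s(\omega)\equiv p \pmod q$. The function $s$ is measurable because the $A_j$ are. Composing these identities gives
\begin{align*}
F^{(n)}_\omega(A_0(\omega))=A_{S_n(\omega)}(\sigma^n\omega),\qquad S_n(\omega)=\sum_{k=0}^{n-1}s(\sigma^k\omega).
\end{align*}
Because $A_j(\eta)\in[\lfloor j/q\rfloor,\lfloor j/q\rfloor+1)$ for every $\eta$, the quantity $F^{(n)}_\omega(A_0(\omega))$ equals $S_n(\omega)/q$ up to an error bounded by $2$. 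The Li--Lu theorem gives the limit (\ref{eqn:randomrotationnumber}) for every $x$ a.s. Any two lifts have rotation numbers differing by $o(n)/n$, so starting at $x=A_0(\omega)$ rather than a fixed $x$ costs nothing. Hence $\rho_F(\omega)=\lim_n S_n(\omega)/(nq)$ a.s.

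The third step, and the main obstacle, is to show that $s(\omega)=p$ almost surely, so that $S_n=np$ and $\rho_F=p/q$. The first restriction comes from the standard lift normalisation (\ref{eqn:stdlift}). Since $F_\omega(0)\in[0,1)$, we have $F_\omega([0,1))\subset[0,2)$, so $s(\omega)\in[0,2q)$. Combined with $s\equiv p$ and $1\le p<q$, this leaves only $s(\omega)\in\{p,p+q\}$. To exclude $p+q$ I will first try to use the index convention fixed in Definition~\ref{defn:rpc}. If $p$ is defined there as the index shift of the standard lift measured from the first cycle point after $0$, then $s=p$ holds by definition. Otherwise, the case $s(\omega)=p+q$ must be handled directly. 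It can only occur when $F_\omega(A_0(\omega))\ge1$, that is, when the orbit crosses the lift of $0$. In that case I would adjust the labelling, starting the cycle at a different point in $[0,1)$, to absorb the extra $q$. I expect the compatibility between the labelling at $\omega$ and at $\sigma\omega$ to be the delicate point.

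If neither route yields $s\equiv p$ pointwise, the fallback is to show that $\{s=p+q\}$ contributes zero in the Birkhoff average. That would give $\rho_F=\mathbb{E}[s\mid\mathcal{I}]/q=p/q$, where $\mathcal{I}$ is the $\sigma$-invariant sigma-algebra.
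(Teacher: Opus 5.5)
\textbf{There is a genuine gap: the step you call the main obstacle is never closed.} Your first two steps are sound, and in fact they re-encode the paper's argument. The shift $s(\omega)$ with $F_\omega(A_j(\omega))=A_{j+s(\omega)}(\sigma\omega)$ is well defined, $s\equiv p$ $\mathrm{mod}\ q$, and $\rho_F=\lim_n S_n/(nq)$. But you never show $s(\omega)=p$, and none of your three routes works as stated:
\begin{enumerate}
\item Definition \ref{defn:rpc} defines $p$ through the permutation of the circle points, not through the lift, so $s=p$ does not hold by definition.
\item Relabelling is not available, because the labels are pinned down by the ordering (\ref{eqn:ajorder}) in $[0,1)$.
\item The fallback of showing that $\{s=p+q\}$ is negligible in Birkhoff averages comes with no mechanism.
\end{enumerate}
Any correct argument at this point must use $p\geq 1$ essentially. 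For $p=0$ your reduction leaves $s\in\{0,q\}$, and $s=q$ genuinely occurs with positive probability. In Example \ref{ex:fixedpoint}, a random $(0,1)$-cycle $a_0\equiv 1/4$, one has $s(\omega)=1=q$ whenever $\omega_0=1$, giving $\rho_F=p_1\neq 0$. Your proposal never uses $p\ge 1$ beyond listing the two residues, so it cannot be completed with the ideas given.

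\textbf{The missing idea is a one-line sharpening of a bound you already have.} You applied $F_\omega([0,1))\subset[0,2)$ (Lemma \ref{lem:Fbound}) only to $A_0(\omega)$, which gives $0\le s<2q$. Apply it also to the largest cycle point $A_{q-1}(\omega)<1$. Then $A_{s+q-1}(\sigma\omega)=F_\omega(A_{q-1}(\omega))<F_\omega(1)=F_\omega(0)+1<2$. Since $A_j(\eta)\geq\lfloor j/q\rfloor$, this forces $s+q-1\le 2q-1$, that is, $s\le q$. Together with $s\equiv p$ $\mathrm{mod}\ q$ and $1\le p\le q-1$, this excludes $s=p+q$. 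Hence $s(\omega)=p$ almost surely, $S_n=np$, and $\rho_F=p/q$.

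This is exactly the content of the paper's proof. It shows that $\lfloor F_\omega(a_j(\omega))\rfloor\in\{0,1\}$ is non-decreasing in $j$ and must jump precisely where the fractional parts wrap around, between $j=q-p-1$ and $j=q-p$. Because $p\geq 1$, this jump index lies strictly inside $\{0,\ldots,q-1\}$. So the floor is $0$ at $j=0$, i.e.\ $F_\omega(a_0(\omega))=a_p(\sigma\omega)$, which in your notation is $s=p$.
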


The assumption that $1\leq p<q$ is necessary: if $p=0$, then the random periodic cycle reduces to a collection of random fixed points, the existence of which does not ensure that $\rho_F(\omega)$ is zero. Indeed, we demonstrate that a random composition of order-preserving homeomorphisms can have a non-zero random rotation number even when they share a common fixed point(see Example \ref{ex:fixedpoint}). 
In the quasiperiodically-forced setting, J\"ager and Keller \cite{JagerKeller2006} derived a related result showing that the existence of a continuous $p,q$-invariant graph forces a rational relationship between $\rho(F)$ and the rotation number of the base dynamics.

The ubiquity of deterministic periodic behaviour for randomly generated circle homeomorphisms has been investigated from several points of view.
Downarowicz et al.~\cite{Downarowiczetal1992} estimated the probability of fixed points for homeomorphisms generated by a canonical random scheme and raised the question of whether periodic points existed almost surely.
Using the Polish group structure of the space $\mathcal{\mathcal{H}}$, Darji et al.~\cite{Darjietal2020} showed that every non-Haar null conjugacy class consists of maps with finitely many periodic points and asked whether the set of maps without periodic points is Haar null.
Our final result shows that, for the standard random lift of a RCH, integer values of the mean random rotation number signify that deterministic fixed points exist with positive probability:

\begin{thmB}
Let $F\in\tilde{\mathcal{H}}(\Omega)$ be the standard random lift of the generator $f\in \mathcal{H}(\Omega)$ of a random circle homeomorphism over a measure-preserving dynamical system $(\Omega,\mathcal{F},\mathbb{P},\sigma)$. If the mean random rotation number satisfies $\rho(F)\in \mathbb{Z}$, then 
\begin{align}
\mathbb{P}(\{\omega\in\Omega: f_\omega\ \mathrm{has\ a\ fixed\ point}\})>0.
\end{align}
\end{thmB}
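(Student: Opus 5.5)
Throughout, $F$ is the standard random lift, so $F_\omega(0)\in[0,1)$. We argue by contradiction: assume $f_\omega$ has no fixed point for $\mathbb{P}$-almost every $\omega$.

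\emph{Step 1: each fibre map moves every point strictly into $(0,1)$.} For a single $\omega$ without fixed points, $F_\omega(x)-x$ is continuous, $1$-periodic and never an integer. Otherwise $\pi(x)$ would be fixed by $f_\omega$. Since $F_\omega(0)-0\in[0,1)$ and the value $0$ is excluded, this forces
\[
0<F_\omega(x)-x<1 \quad\text{for all } x\in\mathbb{R}.
\]
By continuity and periodicity the function $F_\omega(x)-x$ attains a minimum $m(\omega)>0$ and a maximum $M(\omega)<1$.

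\emph{Step 2: the rotation number lies strictly between $0$ and $1$.} Telescope along the orbit:
\[
F^{(n)}_\omega(0)=\sum_{k=0}^{n-1}\Bigl(F_{\sigma^k\omega}\bigl(F^{(k)}_\omega(0)\bigr)-F^{(k)}_\omega(0)\Bigr).
\]
This gives
\[
\sum_{k<n} m(\sigma^k\omega)\le F^{(n)}_\omega(0)\le \sum_{k<n} M(\sigma^k\omega).
\]
The functions $m$ and $M$ are measurable, since they are the infimum and supremum over rational $x$ of measurable functions, and they take values in $[0,1]$, hence are integrable. Divide by $n$ and apply Birkhoff's ergodic theorem, together with the Li--Lu theorem for the left-hand limit. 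This yields
\[
\mathbb{E}[m\mid\mathcal{I}]\le\rho_F\le \mathbb{E}[M\mid\mathcal{I}]\quad\text{a.e.},
\]
where $\mathcal{I}$ is the invariant $\sigma$-algebra. Integrating gives
\[
0<\int m\,\mathrm{d}\mathbb{P}\le\rho(F)\le\int M\,\mathrm{d}\mathbb{P}<1 .
\]
The strict inequalities hold because $m>0$ and $M<1$ almost surely. This contradicts $\rho(F)\in\mathbb{Z}$, so the event that $f_\omega$ has a fixed point has positive probability.

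\emph{Expected obstacle.} The main care point is the conversion from pointwise bounds to the mean. One does not need $\sigma$ ergodic here. Alternatively, one can avoid Birkhoff entirely: $\rho(F)=\lim_n \frac1n\int F^{(n)}_\omega(0)\,\mathrm{d}\mathbb{P}$ by dominated convergence, since $|F^{(n)}_\omega(0)|/n\le 1$. Combined with invariance of $\mathbb{P}$, this gives $\int m\le \frac1n\int F^{(n)}(0)\le\int M$ directly. Measurability of $m$ and $M$ is routine.
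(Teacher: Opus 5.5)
Your proof is correct and follows the paper's own argument: assume fixed points occur only on a null set, use the standard-lift normalisation and the intermediate value theorem to get $0<\mathrm{Dev}_F(\omega,\cdot)<1$, and then squeeze $\rho(F)$ strictly between $\int m\,\mathrm{d}\mathbb{P}>0$ and $\int M\,\mathrm{d}\mathbb{P}<1$. The differences are cosmetic. The paper cites Proposition \ref{prp:rhobound} for $\int m\,\mathrm{d}\mathbb{P}\le\rho(F)\le\int M\,\mathrm{d}\mathbb{P}$, which you re-derive inline (by telescoping and Birkhoff, or by your dominated-convergence variant), and it proves $\int m\,\mathrm{d}\mathbb{P}>0$ explicitly via the sets $A_n=\{m\ge 1/n\}$ rather than quoting the standard fact.
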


We demonstrate the set of maps with a fixed point in this result does not necessarily have full $\mathbb{P}$-measure (see Example \ref{ex:fixedpointhalf}). Finally, we give an example to show that Theorem B cannot be generalised to other values of $\rho(F)$, since non-integer values of the mean random rotation number do not ensure the existence of deterministic periodic points (Example \ref{ex:periodic0}).

\section{Random circle homeomorphisms}

Orientation-preserving homeomorphisms of the circle are homotopic to the identity, and it is helpful to work their deviations from the identity map.
Given a RCH $(\sigma,f)$ and an integrable random lift $F\in\tilde{\mathcal{H}}(\Omega)$, its \emph{deviation function} is the map $\mathrm{Dev}_F:\Omega\times \mathbb{R}\to\mathbb{R}$ given by
\begin{align}
\mathrm{Dev}_F(\omega,x)=F_\omega(x)-x.    
\end{align}
By (\ref{eqn:integrabilityDev}), we have $\mathrm{Dev}_F(\cdot, x)\in L^1(\Omega)$ for each $x\in\mathbb{R}$. Since $F_\omega$ satisfies (\ref{eqn:degree1}), the function $\mathrm{Dev}_F$ satisfies $\mathrm{Dev}_F(\omega,x+1)=\mathrm{Dev}_F(\omega,x)$ for all $\omega\in\Omega$ and $x\in\mathbb{R}$. Therefore, it is possible to project the deviation function of the random lift to the circle by defining $\mathrm{dev}_F:\Omega\times \mathbb{S}^1\to\mathbb{R}$ by $\mathrm{dev}_F(\omega,x)=\mathrm{Dev}_F(\omega,\pi^{-1}(x))$. This map can then be used to express the generator $f$ of the RCH as follows:
\begin{align}
f_\omega(x)=x+\mathrm{dev}_F(\omega,x)\ \mathrm{mod}\:1.    
\end{align}

The prototypical examples of orientation-preserving circle maps are the Arnold maps, which we generalise to the random setting. Given a pair of measurable functions $\beta:\Omega\to [0,1)$ and $\alpha:\Omega\to[-1,1]$, a \emph{random Arnold homeomorphism} over $\sigma$ is generated by the map $f\in \mathcal{H}(\Omega)$ defined by
\begin{align}\label{eqn:Arnold}
f_\omega(x) = x+\frac{\alpha(\omega)}{2\pi}\sin(2\pi x) + \beta(\omega) \ \mathrm{mod}\:1
\end{align}
for all $x\in \mathbb{S}^1$ and $\omega\in\Omega$. The requirement that $|\alpha|\leq 1$ ensures that each $f_\omega$ is injective.

The formalism of random circle homeomorphisms can be used to study the properties of long compositions of circle homeomorphisms that are chosen randomly and independently with respect to the same probability measure from a collection $\{g_0,g_1,\ldots,g_{q-1}\}\subset \mathcal{H}$. 
To achieve this, we can form a RCH where the base dynamics is a Bernoulli shift. More specifically, let $Z_q=\{0,1,\ldots, q-1\}$, set $\Omega=(Z_q)^\mathbb{Z}=\{\omega=(\omega_n)_{n\in\mathbb{Z}}:\omega_n\in Z_q,n\in\mathbb{Z}\}$ to be the collection of bi-infinite sequences on $q$ symbols with the product topology, and let $\sigma:\Omega\to\Omega$ denote the left shift transformation, where $(\sigma\omega)_n=\omega_{n+1}$ for each $n\in\mathbb{Z}$. Any choice of probability vector $(p_0,\ldots, p_{q-1})$, where $0< p_i< 1$ and $\sum_i p_i=1$, determines a probability measure $\mathbb{P}$ on $\Omega$. Namely,
for each \emph{cylinder set}
\begin{align}
C^{n_1,n_2,\ldots,n_k}_{\alpha_1,\alpha_2,\ldots,\alpha_k}=\{\omega\in\Omega: \omega_{n_i}=\alpha_i, 1\leq i\leq k\},
\end{align}
we fix
\begin{align}
\mathbb{P}\left(C^{n_1,n_2,\ldots,n_k}_{\alpha_1,\alpha_2,\ldots,\alpha_k}\right)=\prod_{i=1}^kp_{\alpha_i}
\end{align}
and then extend $\mathbb{P}$ to the Borel sigma-algebra $\mathcal{F}$. We call $(\Omega,\mathcal{F},\mathbb{P},\sigma)$ the \emph{two-sided $(p_0,\ldots,p_{q-1})$-Bernoulli shift} (see, for example, \cite{KatokHasselblatt1995}). Finally, we define the generator $f\in\mathcal{H}(\Omega)$ by setting $f_\omega=g_{\omega_0}$. Random walks on $\mathcal{H}$ formed in this way have been studied in connection with synchronisation phenomena (see Antonov \cite{Antonov1984}, Malicet \cite{Malicet2017}).

The notion of an invariant measure for a RCH $(\sigma,f)$ can be expressed in terms of the \emph{associated skew-product map} $S_f:\Omega\times\mathbb{S}^1\to\Omega\times\mathbb{S}^1$ given by
\begin{align}
(\omega,x)\mapsto (\sigma\omega, f_\omega(x)).
\end{align}
Denoting the Borel sigma-algebra of $\mathbb{S}^1$ by $\mathcal{B}(\mathbb{S}^1)$, we let $\mathcal{P}_{\mathbb{P}}(\Omega\times\mathbb{S}^1)$ denote the set of probability measures $\mu$ on the measurable space $(\Omega\times\mathbb{S}^1,\mathcal{F}\otimes\mathcal{B}(\mathbb{S}^1))$ with marginal $\mathbb{P}$: that is, measures $\mu$ for which $(\pi_{\Omega})_\star\mu=\mathbb{P}$, where $\pi_{\Omega}:\Omega\times\mathbb{S}^1\to\Omega$ is the projection onto the first coordinate.

Given the continuity of the fibre maps $f_\omega$ and the compactness of $\mathbb{S}^1$, it follows \cite[p.~31]{Arnold1998} that the subset of $\mathcal{P}_{\mathbb{P}}(\Omega\times\mathbb{S}^1)$ consisting of $S_f$-invariant probability measures is non-empty.
An invariant probability measure $\mu\in\mathcal{P}_{\mathbb{P}}(\Omega\times\mathbb{S}^1)$ can be disintegrated to give an essentially unique family $\{\mu_\omega\}_{\omega\in\Omega}$ of \emph{sample measures} on $\mathbb{S}^1$, such that $\omega\mapsto \mu_\omega(B)$ is measurable for each $B\in\mathcal{B}(\mathbb{S}^1)$ and for each $U\in \mathcal{F}\otimes\mathcal{B}(\mathbb{S}^1)$ we have
\begin{align}\label{eqn:samplemeasure}
\mu(U)= \int_{\omega\in\Omega}\mu_\omega(U_\omega)\:\mathrm{d}\mathbb{P}(\omega),
\end{align}
where $U_\omega$ denotes the $\omega$-section of $U$ (see Arnold \cite{Arnold1998}). As shown by Ruffino and Rodrigues \cite{RodriguesRuffino2013}, given an invariant probability measure $\mu\in\mathcal{P}_{\mathbb{P}}(\Omega\times\mathbb{S}^1)$, by the Birkhoff ergodic theorem applied to the skew product $S_f$, the sequence
\begin{align}\label{eqn:devsum}
\frac{1}{n}\left(F^{(n)}_\omega(x)-x\right) = \frac{1}{n}\sum_{i=0}^{n-1} \mathrm{dev}_F\left(S^i_f(\omega,x)\right)
=\frac{1}{n}\sum_{i=0}^{n-1} \mathrm{dev}_F\left(\sigma^i\omega,f^{(i)}_{\omega}(x)\right)
\end{align}
converges for $\mu$-almost every pair $(\omega,x)\in \Omega\times\mathbb{S}^1$ to the limit $\rho_F(\omega)$ that is independent of $x$, and so
\begin{align}\label{eqn:devint}
\rho(F)
=\int_\Omega\rho_F\:\mathrm{d}\mathbb{P}
=\int_{\omega\in\Omega}\int_{\mathbb{S}^1}\rho_F(\omega)\:\mathrm{d}\mu_\omega\:\mathrm{d}\mathbb{P}(\omega)
=\int_{\Omega\times\mathbb{S}^1}\mathrm{dev}_F\:\mathrm{d}\mu.
\end{align}

The basic inequalities satisfied by the standard random lift are given in the following lemma. 

\begin{lem}\label{lem:Fbound}
Let $F\in \tilde{\mathcal{H}}(\Omega)$ be the standard random lift of a random order-preserving circle map $f\in \mathcal{H}(\Omega)$. The random lift $F$ satisfies for all $x\in \mathbb{R}$ and all $\omega\in \Omega$,
\begin{align}\label{eqn:Fbound}
\lfloor x\rfloor \leq F_{\omega}(x)< \lfloor x\rfloor +2.
\end{align}
\end{lem}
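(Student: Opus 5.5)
The plan is to reduce \eqref{eqn:Fbound} to the case $x\in[0,1)$ and then use monotonicity of $F_\omega$ together with the degree 1 property \eqref{eqn:degree1} and the normalisation \eqref{eqn:stdlift}. Fix $\omega\in\Omega$ and write $n=\lfloor x\rfloor$ and $y=x-n\in[0,1)$. By \eqref{eqn:degree1}, applied $|n|$ times, we have $F_\omega(x)=F_\omega(y)+n$. It therefore suffices to prove
\begin{align*}
0\leq F_\omega(y)<2 \quad\text{for all } y\in[0,1).
\end{align*}

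For the lower bound, $F_\omega$ is strictly increasing and $y\geq 0$. Hence $F_\omega(y)\geq F_\omega(0)\geq 0$, where the last inequality is the standard lift condition \eqref{eqn:stdlift}.

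For the upper bound, $y<1$ and strict monotonicity give $F_\omega(y)<F_\omega(1)$. By \eqref{eqn:degree1} we have $F_\omega(1)=F_\omega(0)+1$, and by \eqref{eqn:stdlift} this is less than $2$. Combining the two bounds and adding $n$ back yields $\lfloor x\rfloor\leq F_\omega(x)<\lfloor x\rfloor+2$.

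There is no real obstacle in this argument. The only points needing care are that strict monotonicity is what gives the strict upper inequality, and that the reduction via \eqref{eqn:degree1} works for negative $n$ as well, which follows by applying $F_\omega(x-1)=F_\omega(x)-1$ repeatedly.
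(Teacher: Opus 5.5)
Your proposal is correct and follows the paper's proof essentially verbatim: you reduce to the fractional part via the degree 1 property, then bound $F_\omega(y)$ below by $F_\omega(0)\geq 0$ and above by $F_\omega(1)=F_\omega(0)+1<2$ using strict monotonicity and the standard lift normalisation. Your remark that the reduction also holds for negative $\lfloor x\rfloor$ is a detail the paper leaves implicit.
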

\begin{proof}
Given $x\in\mathbb{R}$, let $x=\lfloor x \rfloor+a$, where $a\in [0,1)$ is the fractional part of $x$. By the degree 1 property (\ref{eqn:degree1}), $F_\omega(x)=F_\omega(a)+\lfloor x \rfloor$. Since $F_\omega$ is strictly increasing, $F_\omega(a)\geq F_\omega(0)$, and $F_\omega(0)\geq 0$ by the standard lift property (\ref{eqn:stdlift}), giving the left inequality. Similarly, $F_\omega(a)<F_\omega(1)=F_\omega(0)+1<2$, giving the right inequality.
\end{proof}

We also need the following measurability result (see Aliprantis and Border \cite[Theorem 18.19]{AliprantisBorder2006}) which uses the compactness of $\mathbb{S}^1$.

\begin{lem}\label{lem:measinfsup}
Let $(\Omega, \mathcal{F}, \mathbb{P})$ be a probability space. Suppose that $u:\Omega\times \mathbb{S}^1\to \mathbb{R}$  is measurable with respect to the first argument for each fixed $x\in \mathbb{S}^1$ and continuous with respect to the second argument for $\mathbb{P}$-almost every fixed $\omega\in\Omega$. Then the functions
\begin{align*}
\omega\mapsto \sup_{x\in \mathbb{S}^1} u(\omega,x),
\qquad
\omega\mapsto \inf_{x\in \mathbb{S}^1} u(\omega,x)
\end{align*}
are measurable.    
\end{lem}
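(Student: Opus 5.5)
The statement immediately above is Lemma \ref{lem:measinfsup}, a measurability result for the supremum and infimum of a function that is measurable in $\omega$ and continuous in $x$. I would prove it by reducing the supremum over $\mathbb{S}^1$ to a supremum over a countable dense set. Let $D=\{x_k\}_{k\in\mathbb{N}}$ be a countable dense subset of $\mathbb{S}^1$, for instance the rational points in $[0,1)$. Let $N\in\mathcal{F}$ be the $\mathbb{P}$-null set off which $x\mapsto u(\omega,x)$ is continuous. We work with the completed probability space, or equivalently modify the functions on $N$, and note that measurability is then understood up to null sets.

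The key step is the identity
\begin{align*}
\sup_{x\in\mathbb{S}^1}u(\omega,x)=\sup_{k\in\mathbb{N}}u(\omega,x_k)\qquad\text{for all }\omega\in\Omega\setminus N.
\end{align*}
The inequality $\geq$ is trivial. For the inequality $\leq$, fix $\omega\notin N$ and $x\in\mathbb{S}^1$, and choose $x_{k_j}\to x$ in $D$. Continuity of $u(\omega,\cdot)$ gives $u(\omega,x_{k_j})\to u(\omega,x)$, so $u(\omega,x)\leq\sup_k u(\omega,x_k)$. Compactness of $\mathbb{S}^1$ ensures that the supremum is attained, so it is finite (real-valued) for $\omega\notin N$.

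Each $\omega\mapsto u(\omega,x_k)$ is measurable by hypothesis. A countable supremum of measurable functions is measurable, since
\[
\{\omega:\sup_k u(\omega,x_k)>c\}=\bigcup_k\{\omega:u(\omega,x_k)>c\}.
\]
Hence $\omega\mapsto\sup_k u(\omega,x_k)$ is measurable. It agrees with $\omega\mapsto\sup_x u(\omega,x)$ outside the null set $N$, so the latter is measurable with respect to the completion. If we instead redefine both functions to be $0$ on $N$, the result is $\mathcal{F}$-measurable outright. The infimum follows by applying the same argument to $-u$.

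The only real subtlety is the exceptional null set on which continuity fails. There the supremum over $\mathbb{S}^1$ could be non-measurable, so the conclusion must be read modulo $\mathbb{P}$-null sets, which is consistent with the paper identifying functions that agree $\mathbb{P}$-almost everywhere. In the paper's applications, $u$ is built from $F_\omega$ and is continuous for every $\omega$, so $N=\emptyset$ and the functions are genuinely $\mathcal{F}$-measurable.
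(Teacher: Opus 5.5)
Your proof is correct, but it follows a different route from the paper, which gives no argument of its own and simply cites the measurable maximum theorem of Aliprantis and Border (Theorem 18.19). That theorem treats Carathéodory functions maximised over weakly measurable compact-valued correspondences, and it also provides measurable selections of maximisers. Your countable-dense-set argument is the elementary special case where the correspondence is the constant map $\omega\mapsto\mathbb{S}^1$. That case is all the paper needs, since $m$ and $M$ in Proposition \ref{prp:rhobound} and Theorem B only require measurability of the value functions, not of the maximisers.

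Your version also has a real advantage. In Aliprantis--Border a Carathéodory function must be continuous in $x$ for \emph{every} $\omega$, so the citation does not literally cover the lemma's ``$\mathbb{P}$-almost every $\omega$'' hypothesis, whereas you treat the exceptional null set explicitly. Your caveat there is genuinely needed. Take $\Omega=[0,1]$ with Borel sets and Lebesgue measure, let $N$ be the Cantor set, $A\subset N$ a non-Borel subset, and $h:N\to\mathbb{S}^1$ injective. Set $u(\omega,x)=1$ if $\omega\in A$ and $x=h(\omega)$, and $u=0$ otherwise. Each section $u(\cdot,x)$ is the indicator of at most one point, so it is Borel, and $u(\omega,\cdot)\equiv 0$ off $N$. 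Yet $\sup_x u(\omega,x)=\chi_A(\omega)$ is not Borel.

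So the lemma as stated holds only modulo null sets, or for the completion, exactly as you say. This is harmless for the paper, because $\mathrm{dev}_F(\omega,\cdot)$ is continuous for every $\omega$.
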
 

These lemmas can be used to obtain bounds for the mean random rotation number.

\begin{prp}\label{prp:rhobound}
Let $(\Omega, \mathcal{F}, \mathbb{P}, \sigma)$ be a measure-preserving dynamical system and let $F\in \tilde{\mathcal{H}}(\Omega)$ be the standard lift of $f$. Then $\rho(F)\in [0, 1]$. Moreover,
\begin{align}
\int_\Omega m\:\mathrm{d}\mathbb{P}\leq \rho(F) \leq \int_\Omega M\:\mathrm{d}\mathbb{P},
\end{align}
where
\begin{align}
m(\omega)=\inf_{x\in \mathbb{S}^1}\mathrm{dev}_F(\omega,x)\quad \mathrm{and} \quad M(\omega)=\sup_{x\in \mathbb{S}^1}\mathrm{dev}_F(\omega,x).
\end{align}
\end{prp}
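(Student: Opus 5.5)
We prove the two-sided bound first and then derive $\rho(F)\in[0,1]$ from it, using the representation (\ref{eqn:devint}) of $\rho(F)$ as an integral of $\mathrm{dev}_F$ against an invariant measure.

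\textbf{Measurability and integrability of $m$ and $M$.} For each $\omega$ the map $x\mapsto \mathrm{dev}_F(\omega,x)$ is continuous on $\mathbb{S}^1$: it is the projection of the continuous, $1$-periodic function $x\mapsto F_\omega(x)-x$. For each fixed $x$ the map $\omega\mapsto \mathrm{dev}_F(\omega,x)$ is measurable, because evaluation $\tilde{\mathcal{H}}\to\mathbb{R}$ is continuous and $F$ is measurable. Lemma \ref{lem:measinfsup} therefore shows that $m$ and $M$ are measurable. Lemma \ref{lem:Fbound} gives $\lfloor x\rfloor-x\le \mathrm{Dev}_F(\omega,x)<\lfloor x\rfloor+2-x$, so $-1< \mathrm{dev}_F(\omega,x)<2$. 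Hence $m$ and $M$ are bounded, and in particular integrable. Alternatively, integrability follows from (\ref{eqn:integrabilityDev}) and $|m|,|M|\le \|F_\omega-\mathrm{Id}\|$.

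\textbf{The two-sided bound.} Fix an $S_f$-invariant $\mu\in\mathcal{P}_{\mathbb{P}}(\Omega\times\mathbb{S}^1)$, which exists as noted above. By (\ref{eqn:devint}) and the disintegration (\ref{eqn:samplemeasure}),
\[
\rho(F)=\int_{\Omega\times\mathbb{S}^1}\mathrm{dev}_F\,\mathrm{d}\mu=\int_\Omega\int_{\mathbb{S}^1}\mathrm{dev}_F(\omega,x)\,\mathrm{d}\mu_\omega(x)\,\mathrm{d}\mathbb{P}(\omega).
\]
Since each $\mu_\omega$ is a probability measure, $m(\omega)\le\int_{\mathbb{S}^1}\mathrm{dev}_F(\omega,x)\,\mathrm{d}\mu_\omega(x)\le M(\omega)$. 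Integrating over $\Omega$ gives the two-sided bound. As a check that does not depend on $\mu$, one can also argue directly from (\ref{eqn:devsum}). For every $x$,
\[
\sum_{i=0}^{n-1}m(\sigma^i\omega)\le F^{(n)}_\omega(x)-x\le \sum_{i=0}^{n-1}M(\sigma^i\omega).
\]
Dividing by $n$ and applying Birkhoff's theorem to $m$ and $M$ shows that $\rho_F$ lies between the conditional expectations of $m$ and $M$ with respect to the invariant $\sigma$-algebra. Integrating then gives the same bound.

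\textbf{The bound $\rho(F)\in[0,1]$.} The crude estimates $-1<\mathrm{dev}_F<2$ are not sharp enough, so we need a sharper argument. We take $x=0$ in Lemma \ref{lem:Fbound} and iterate. The maps $F_\omega$ are increasing and satisfy $F_\omega(k)\in[k,k+1)$ for integers $k$, so induction gives $0\le F^{(n)}_\omega(0)<n$. Indeed, if $F^{(n-1)}_\omega(0)\in[j,j+1)$ with $j\le n-2$, then $F^{(n)}_\omega(0)\in[F_{\sigma^{n-1}\omega}(j),F_{\sigma^{n-1}\omega}(j+1))\subset[j,j+2)$, which lies below $n$. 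Dividing by $n$ and applying the Li--Lu theorem at $x=0$ gives $0\le\rho_F(\omega)\le1$ almost surely, and integrating gives $\rho(F)\in[0,1]$. The only delicate points in the whole argument are the measurability needed to apply Lemma \ref{lem:measinfsup} and the use of $x=0$ in the last step instead of the uniform bounds on $\mathrm{dev}_F$. Neither is a real obstacle.
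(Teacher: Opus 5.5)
Your proof is correct. For $\rho(F)\in[0,1]$ it is the paper's argument: iterate Lemma~\ref{lem:Fbound} from the normalisation $F_\omega(0)\in[0,1)$ and pass to the Li--Lu limit at $x=0$. Your induction from $0$ even gives the sharper $0\le F^{(n)}_\omega(0)<n$, and it avoids the misprint in the paper's step, where $<\lfloor F^{(n)}_\omega(x)\rfloor+2$ is intended. Your opening sentence, however, promises to derive $\rho(F)\in[0,1]$ from the two-sided bound. As you then observe, $-1<m$ and $M<2$ are too weak for that, so the framing should be fixed.

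For the two-sided bound, your primary route genuinely differs from the paper's. The paper argues pathwise: it sandwiches $(F^{(n)}_\omega(x)-x)/n$ between the Birkhoff averages of $m$ and $M$ along the base orbit and then integrates, which is exactly your secondary \emph{check}. You instead use the representation (\ref{eqn:devint}), $\rho(F)=\int\mathrm{dev}_F\,\mathrm{d}\mu$ for an $S_f$-invariant $\mu$, and bound each fibre integral $\int\mathrm{dev}_F(\omega,\cdot)\,\mathrm{d}\mu_\omega$ between $m(\omega)$ and $M(\omega)$.

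Your route is shorter and needs no ergodic theorem for $m$ or $M$. But it outsources the work to (\ref{eqn:devint}): the existence of an invariant measure with marginal $\mathbb{P}$, Birkhoff's theorem for the skew product, and the independence of the limit from $x$. As written, it also controls only the mean $\rho(F)$. The paper's argument needs only Li--Lu and Birkhoff on the base. It gives the stronger almost-sure statement $\mathbb{E}[m\mid\mathcal{I}]\le\rho_F\le\mathbb{E}[M\mid\mathcal{I}]$, with $\mathcal{I}$ the $\sigma$-invariant sigma-algebra, so for ergodic $\sigma$ it bounds $\rho_F(\omega)$ itself. Your observation that $m$ and $M$ are bounded, hence integrable, supplies a hypothesis for Birkhoff's theorem that the paper leaves implicit.
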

\begin{proof} 
By Lemma \ref{lem:Fbound}, for $x\geq 0$ and $n\in\mathbb{N}$, we have $F^{(n)}_\omega(x)\geq 0$. Moreover, for $x\in [0, 1)$, we have $F_\omega(x)<2$,  and if $F^{(n)}_\omega(x)<n+1$ for some $n\in\mathbb{N}$, then it follows that $F^{(n+1)}_\omega(x)=F_{\sigma^n\omega}(F^{(n)}_\omega(x))<\lfloor F^{(n)}_\omega(x)\rfloor\leq n+2$. Hence, by induction, for all $n\in\mathbb{N}$ we have $0\leq F^{(n)}_\omega(x)<n+1$ and therefore
\begin{align}
0\leq \frac{F^{(n)}_\omega(0)}{n}<\frac{n+1}{n}
\end{align}
for all $n\in\mathbb{N}$. By (\ref{eqn:randomrotationnumber}), the sequence $(F^{(n)}_\omega(0)/n)$ converges to $\rho_F(\omega)$ for $\mathbb{P}$-almost every $\omega\in\Omega$, and so $\rho_F(\omega)\in [0,1]$ for $\mathbb{P}$-almost every $\omega\in\Omega$.

For the second statement, notice that for each $n\in\mathbb{N}$ and $x\in\mathbb{R}$ we have $F^{(n)}_\omega(x)-x =\sum_{i=0}^{n-1}\mathrm{dev}_F(\sigma^i\omega,f^{(i)}_\omega(x))$ by equation (\ref{eqn:devsum}), and so
\begin{align}\label{eqn:mMbounds}
\frac{1}{n}\sum^{n-1}_{i=0}m(\sigma^i \omega)\leq \frac{F^{(n)}_{\omega}(x)-x}{n}\leq \frac{1}{n}\sum^{n-1} _{i=0}M(\sigma^i\omega).
\end{align} 
By Lemma \ref{lem:measinfsup}, $m$ and $M$ are measurable. By equation (\ref{eqn:randomrotationnumber}), the central expression converges to $\rho_F(\omega)$ for $\mathbb{P}$-almost every $\omega\in\Omega$, and the result then follows by applying Birkhoff's ergodic theorem to each of the outer expressions and then integrating over $\Omega$.
\end{proof}

\section{Random periodic cycles}

We now define the concept of a random periodic cycle for a RCH. For $q\in\mathbb{N}$, let $Z_q$ denote the finite set $\{0,1,\ldots,q-1\}$ as before.

\begin{defn}\label{defn:rpc}
For $q\in\mathbb{N}$, a \emph{random periodic cycle} is a collection $\{a_j\}_{j\in Z_q}$ of measurable functions $a_j:\Omega\to\mathbb{S}^1$ together with a permutation $\tau:Z_q\to Z_q$ such that for $\mathbb{P}$-almost every $\omega\in\Omega$
\begin{align}\label{eqn:ajorder}
0\leq  a_0(\omega)< a_{1}(\omega)< \cdots<a_{q-1}(\omega)<1
\end{align}
and
\begin{align}\label{eqn:covariance}
f_\omega(a_j(\omega))=a_{\tau(j)}(\sigma\omega)
\end{align}
for each $j\in Z_q$. 
For $p\in Z_q$, the \emph{$(p,q)$-permutation} is the bijection $\tau:Z_q\to Z_q$ given by
\begin{align}
\tau(j)=j+p\ \mathrm{mod}\:q
\end{align}
for each $j\in Z_q$. A \emph{random $(p,q)$-periodic cycle} is a random periodic cycle for which the permutation $\tau$ is the $(p,q)$-permutation. The \emph{period} of a random periodic cycle is the order of the permutation $\tau$: the  minimal natural number $n$ for which $\tau^n=\mathrm{Id}$. Thus the period of a random $(p,q)$-periodic cycle is $q/\mathrm{gcd}(p,q)$.
\end{defn}

We inherit simple examples of random periodic cycles from the deterministic setting. For example, by taking $\Omega$ to be a singleton and $\sigma$ the identity, the generator reduces to a single homeomorphism, which we could take to be the rigid rational rotation $f_\omega(x)=x+p/q\ \mathrm{mod}\:1$, for some natural numbers $p$ and $q$ with $p<q$. So if we set $a_j(\omega)=j/q$, the collection $\{a_j\}_{j\in Z_q}$ is a random $(p,q)$-periodic cycle for the RCH $(\sigma,f)$. 

We now present a non-trivial example of a random periodic cycle, developed from an example of a random fixed point constructed by Newman \cite{Newman2016}.

\begin{ex}
Let $(\Omega,\mathcal{F},\mathbb{P},\sigma)$ be the two-sided $(p_0,p_1)$-Bernoulli shift on the set of symbols $Z_1=\{0,1\}$. 
We construct the generator $f\in \mathcal{H}(\Omega)$ of the RCH $(\sigma,f)$, by setting $f_\omega=g_{\omega_0}$, where
\begin{align}
g_0(x)=\begin{cases}
\frac12x+\frac12 \quad x\in \left[0,\frac14\right)  \\
\frac32x+\frac14 \quad x\in \left[\frac14,\frac12\right)  \\
\frac12x-\frac14 \quad x\in \left[\frac12,\frac34\right)  \\
\frac32x-1 \quad x\in \left[\frac34,1\right) 
\end{cases}
\end{align}
and $g_1(x)=g_0(x)+1/8\ \mathrm{mod}\:1$ (see Figure \ref{fig:g0g1}). 

\begin{figure}[htb]
\centering
\includegraphics[width=0.5\linewidth]{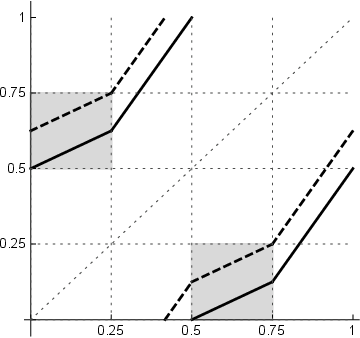}
\caption{Graphs of the circle homeomorphisms $g_0$ (solid line) and $g_1$ (dashed line).}
\label{fig:g0g1}
\end{figure}

For $j\in Z_2$ and $\omega\in\Omega$, let $a_j:\Omega\to\mathbb{S}^1$ be given by
\begin{align}
a_j(\omega) = \frac{j}{2}+\sum_{i=1}^\infty \frac{\omega_{-n}}{2^{n+2}}= (0.j0\omega_{-1}\omega_{-2}\omega_{-3}\ldots)_{\mathrm{base}\:2}.
\end{align}
Note that $a_0(\omega)\in [0,1/4]$ and $a_1(\omega)\in [1/2,3/4]$ for each $\omega\in\Omega$, so $0\leq a_0(\omega)<a_1(\omega)<1$. We have
\begin{align*}
f_\omega(a_0(\omega))
& =\begin{cases}
\frac12a_0(\omega)+\frac12, & \omega_0=0\\
\frac12a_0(\omega)+\frac58, & \omega_0=1
\end{cases}\\
& =\begin{cases}
(0.100\omega_{-1}\omega_{-2}\ldots)_{\mathrm{base}\:2}, & \omega_0=0\\
(0.101\omega_{-1}\omega_{-2}\ldots)_{\mathrm{base}\:2}, & \omega_0=1
\end{cases}\\
& = (0.10\omega_0\omega_{-1}\omega_{-2}\ldots)_{\mathrm{base}\:2}\\
& = a_1(\sigma\omega).
\end{align*}
Similarly, $f_\omega(a_1(\omega))=a_o(\sigma\omega)$. Therefore $f_\omega(a_j(\omega))=a_{j+1\:\mathrm{mod}\:2}(\sigma\omega)$ for each $j\in Z_2$, so the collection $\{a_j\}_{j\in Z_2}$ is a random $(1,2)$-periodic cycle.
\end{ex}

We show that random $(p,q)$-periodic cycles are the only random periodic cycles that are possible for a RCH.

\begin{prp}\label{prp:ranpqcycle}
If the RCH $(\sigma,f)$ has a random periodic cycle $\{a_j\}_{j\in Z_q}$ with permutation $\tau:\mathbb{Z}_q\to Z_q$ for some $q\in\mathbb{N}$, then $\{a_j\}_{j\in Z_q}$ is a random $(p,q)$-cycle, where $p=\tau(0)\in Z_q$.
\end{prp}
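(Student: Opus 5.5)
Fix $\omega$ in the full-measure set where both (\ref{eqn:ajorder}) and (\ref{eqn:covariance}) hold at $\omega$ and at $\sigma\omega$. Since $\sigma$ preserves $\mathbb{P}$, $\sigma^{-1}$ of a full-measure set has full measure, so the intersection of the two sets is still of full measure. The strategy is to show that $\tau$ is forced to be a cyclic shift of indices. The reason is that $f_\omega$ is an orientation-preserving homeomorphism, so it preserves the cyclic order of any finite set of points on $\mathbb{S}^1$.

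First, the points $a_0(\omega)<a_1(\omega)<\cdots<a_{q-1}(\omega)$ are in strictly increasing cyclic order on the circle. Because $f_\omega$ preserves cyclic order and is injective, their images $f_\omega(a_0(\omega)),\ldots,f_\omega(a_{q-1}(\omega))$ are $q$ distinct points that again occur in the cyclic order $0,1,\ldots,q-1$. To make this concrete I will use the standard lift $F_\omega$. Set $x_j=a_j(\omega)\in[0,1)$, so that $0\le x_0<\cdots<x_{q-1}<x_0+1$. Since $F_\omega$ is strictly increasing and has the degree 1 property (\ref{eqn:degree1}), we get
\[
F_\omega(x_0)<F_\omega(x_1)<\cdots<F_\omega(x_{q-1})<F_\omega(x_0)+1 .
\]
All $q$ lifted images therefore lie in a half-open interval of length one. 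By (\ref{eqn:covariance}), their projections are exactly the points $a_{\tau(j)}(\sigma\omega)$, which by (\ref{eqn:ajorder}) applied at $\sigma\omega$ are the $q$ distinct points $y_0<\cdots<y_{q-1}$ in $[0,1)$.

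Second, projecting an increasing sequence of $q$ points lying in an interval $[c,c+1)$ onto $[0,1)$ gives a cyclic rotation of the increasing enumeration. Concretely, let $k$ be the number of indices $j$ with $F_\omega(x_j)<\lceil F_\omega(x_0)\rceil$, i.e. those not yet wrapped past an integer. Then $\pi(F_\omega(x_j))$ is increasing for $j<k$ and increasing for $j\ge k$, and every point in the second block lies below every point in the first block. Hence $\pi(F_\omega(x_j))=y_{j+(q-k)\bmod q}$, and so $\tau(j)=j+s \bmod q$ with $s=(q-k)\bmod q$. Evaluating at $j=0$ gives $s=\tau(0)=p$.

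Third, $\tau$ is a fixed permutation that does not depend on $\omega$. So once this holds for a single $\omega$ in the full-measure set, which is nonempty, we get $\tau(j)=j+p \bmod q$ for all $j$. That is, $\tau$ is the $(p,q)$-permutation.

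The only delicate point is the second step, the cyclic-rotation bookkeeping. The main care needed is with the degenerate case where some $F_\omega(x_j)$ is exactly an integer. This is handled by using the floor/ceiling convention consistently: a lifted image equal to an integer projects to $0$, which is necessarily $y_0$.
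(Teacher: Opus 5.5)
Your proof is correct and follows essentially the same idea as the paper: $f_\omega$ preserves the cyclic order of $a_0(\omega),\ldots,a_{q-1}(\omega)$, and reading that cyclic order linearly in $[0,1)$ forces $\tau$ to be a rigid shift. The only difference is the normalization. The paper post-composes with the rotation $r_{\sigma\omega}$ sending $a_p(\sigma\omega)$ to $0$, whereas you stay in the lift and cut at the integer $\lceil F_\omega(a_0(\omega))\rceil$; your explicit count $k$ (which equals $q-p$ when $p\geq 1$) spells out the ``cyclic order becomes linear order'' step that the paper only sketches, and it anticipates where the lifted images wrap around in the proof of Theorem A.
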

\begin{proof}
Let $r\in \mathcal{H}(\Omega)$ denote the rigid rotation $r_\omega(x)=x-a_p(\omega)\:\mathrm{mod}\:1$. Then $r_{\sigma\omega}(f_\omega(a_0(\omega)))=r_{\sigma\omega}(a_p(\sigma\omega))=0$ for $\mathbb{P}$-almost every $\omega\in\Omega$. Therefore, since $f_\omega$ and $r_{\sigma\omega}$ are orientation-preserving homeomorphisms, for $\mathbb{P}$-almost every $\omega\in\Omega$ we have 
\begin{align}\label{eq:rsigmaorder}
0=r_{\sigma\omega}(a_p(\sigma\omega))<r_{\sigma\omega}(a_{\tau(1)}(\sigma\omega))<\cdots < r_{\sigma\omega}(a_{\tau(q-1)}(\sigma\omega))<1.
\end{align}
Thus by composing with the rotation taking $a_p(\sigma\omega)$ to $0$, the cyclic order on the circle is converted into the linear order on $[0,1)$, thus forcing the permutation $\tau$ to be a rigid shift. By comparing indices in (\ref{eq:rsigmaorder}) with those in (\ref{eqn:ajorder}), we find that $\tau$ satisfies $j=\tau(j)-p\ \mathrm{mod}\:q$ for each $j\in Z_q$, and so $\tau$ is the $(p,q)$-permutation.
\end{proof}

Given a random periodic cycle $\{a_j\}_{j\in Z_q}$ of a RCH with permutation $\tau$, it follows from Proposition \ref{prp:ranpqcycle} that if $\tau$ fixes an element of $Z_q$, then $\tau=\mathrm{Id}$ and so $\{a_j\}_{j\in Z_q}$ is a random $(0,q)$-periodic cycle, or equivalently, $a_j:\Omega\to \mathbb{S}^1$ is a random fixed point for each $j\in Z_q$. If $p\geq 1$ and $p$ and $q$ are coprime, then the permutation $\tau$ is a single cycle and has order $q$. If $\mathrm{gcd}(p,q)=d\geq 1$, then the random $(p,q)$-cycle may be decomposed into $d$ distinct random $(p',q')$-cycles $\{b_{i,j}\}_{j\in Z_{q'}}$, $i\in Z_d$, where $p'=p/d$, $q'=q/d$ and
$b_{i,j}=a_{i+jd}$.

The existence of a random $(p,q)$-periodic cycle with minimal period at least two ensures that the random rotation number of the standard random lift is equal to $p/q$ for $\mathbb{P}$-almost every $\omega\in\Omega$.

\begin{thmA}
Let $F\in\tilde{\mathcal{H}}(\Omega)$ be the standard random lift of the generator $f\in \mathcal{H}(\Omega)$ of a random circle homeomorphism over a measure-preserving dynamical system $(\Omega,\mathcal{F},\mathbb{P},\sigma)$. If there is a random $(p,q)$-periodic cycle for $(\sigma, f)$ with $1\leq p<q$, then for $\mathbb{P}$-almost every $\omega\in\Omega$, the random rotation number $\rho_F\in L^1(\Omega)$ satisfies
\begin{align}
\rho_F(\omega) = \frac{p}{q}.
\end{align}
\end{thmA}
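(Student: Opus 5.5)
The plan is to lift the cycle to $\mathbb{R}$, use the standard-lift normalisation to pin down exactly how $F_\omega$ acts on the lifted points, and then read off the rotation number from a telescoping sum. For each $j\in Z_q$, let $A_j(\omega)\in[0,1)$ be the lift of $a_j(\omega)$. Then for almost every $\omega$ we have $0\le A_0(\omega)<\cdots<A_{q-1}(\omega)<1$. By Proposition \ref{prp:ranpqcycle} and (\ref{eqn:covariance}), $F_\omega(A_j(\omega))-A_{j+p\bmod q}(\sigma\omega)$ is an integer.

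\emph{Step 1: structure of the images.} Write $F_\omega(A_0(\omega))=A_p(\sigma\omega)+c(\omega)$ with $c(\omega)\in\mathbb{Z}$. The map $F_\omega$ is strictly increasing and sends $[A_0,A_0+1)$ onto an interval of length $1$. Hence the images $F_\omega(A_0)<\cdots<F_\omega(A_{q-1})$ are increasing, lie in a window of length $1$, and each is a lift of the prescribed point. It follows that
\begin{align*}
F_\omega(A_j(\omega))=\begin{cases} A_{j+p}(\sigma\omega)+c(\omega), & 0\le j<q-p,\\ A_{j+p-q}(\sigma\omega)+c(\omega)+1, & q-p\le j<q.\end{cases}
\end{align*}

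\emph{Step 2: the key normalisation $c\equiv 0$.} This is where the hypotheses are really used, and I expect it to be the main point to get right. For the lower bound, Lemma \ref{lem:Fbound} and $A_0\ge 0$ give $F_\omega(A_0)\ge 0$, so $c>-A_p(\sigma\omega)>-1$, hence $c\ge 0$. For the upper bound, $1\le p$ means the index $j=q-1$ falls in the second case, with image index $p-1\ge 0$. Therefore
\begin{align*}
c(\omega)+1\le A_{p-1}(\sigma\omega)+c(\omega)+1=F_\omega(A_{q-1}(\omega))<F_\omega(1)=F_\omega(0)+1<2,
\end{align*}
so $c<1$, and thus $c=0$. (If $p=0$ there is no wrap-around term, and this argument fails, consistent with Example \ref{ex:fixedpoint}.)

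\emph{Step 3: telescoping.} Put $S(\omega)=\sum_{j}A_j(\omega)\in[0,q)$. Step 2 gives
\begin{align*}
\sum_j F_\omega(A_j(\omega))=S(\sigma\omega)+p.
\end{align*}
The same conclusion holds when the $A_j$ are shifted by a common integer, by the degree 1 property (\ref{eqn:degree1}). Since the set $\{F_\omega(A_j(\omega))\}_j$ equals $\{A_j(\sigma\omega)\}_j$ shifted by integers in $\{0,1\}$, induction on $n$ (applying the relation at $\sigma^k\omega$, on a full-measure $\sigma$-invariant set) yields
\begin{align*}
\sum_{j}\bigl(F^{(n)}_\omega(A_j(\omega))-A_j(\omega)\bigr)=S(\sigma^n\omega)-S(\omega)+np,
\end{align*}
whose right-hand side lies within $q$ of $np$.

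\emph{Step 4: pass to the limit.} Since the $A_j$ depend on $\omega$, we cannot directly apply the Li--Lu theorem at the point $x=A_j(\omega)$. Instead, use the standard estimate for lifts of homeomorphisms, $|(F^{(n)}_\omega(x)-x)-(F^{(n)}_\omega(y)-y)|<1$ for all $x,y$. This follows from monotonicity and (\ref{eqn:degree1}) applied to $F^{(n)}_\omega$. Combined with (\ref{eqn:randomrotationnumber}) at $x=0$, it gives $(F^{(n)}_\omega(A_j(\omega))-A_j(\omega))/n\to\rho_F(\omega)$ for almost every $\omega$. Dividing the identity of Step 3 by $n$ and letting $n\to\infty$ gives $q\rho_F(\omega)=p$ almost surely.
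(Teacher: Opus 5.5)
Your proof is correct. Steps 1--2 reach the paper's one-step formula $F_\omega(a_j(\omega))=a_{j+p \bmod q}(\sigma\omega)+\chi_{\{q-p,\ldots,q-1\}}(j)$ by an equivalent route. The paper uses $\lfloor F_\omega(x)\rfloor\in\{0,1\}$ on $[0,1)$ and a forced jump of the floor between the indices $q-p-1$ and $q-p$. You use the length-one window and the bounds $c\ge 0$, $c+1<2$.

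After that the two arguments diverge. The paper follows the single orbit of $a_0$ for $nq$ steps. It therefore has to count wrap-arounds via the identity $\#\{1\le i\le q: ip \bmod q\in Z_p\}=p$, which needs a $\gcd(p,q)$ argument. It then sandwiches $F^{(nq)}_\omega(0)$ using $a_0-1<0\le a_0$ along multiples of $q$. You instead sum over all $q$ points of the cycle, so the number of wrap-arounds per step is simply $\#\{q-p,\ldots,q-1\}=p$ and no number theory is needed. Your uniform estimate $|(F^{(n)}_\omega(x)-x)-(F^{(n)}_\omega(y)-y)|<1$ then handles the $\omega$-dependence of the starting points for every $n$; the paper's sandwich is a special case of it.

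In exchange, the paper's route gives the exact displacement $F^{(nq)}_\omega(a_0(\omega))=a_0(\sigma^{nq}\omega)+np$ of an individual point of the cycle. That is the direct analogue of a deterministic orbit of type $p/q$. You obtain only the displacement summed over the cycle, which is all the rotation number requires.

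One wording fix in Step 3: the integer shifts relating $\{F^{(n)}_\omega(A_j(\omega))\}_j$ to $\{A_k(\sigma^n\omega)\}_k$ are not common to all points. Already for $n=1$ they are $0$ or $1$ depending on $j$. The justification should be that the degree-one property acts point by point, $F(A_k+m_k)=F(A_k)+m_k$ for arbitrary integers $m_k$. With that reading your induction goes through.
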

\begin{proof}
Given the random $(p,q)$-cycle, there is a full-measure set $\Omega'$ on which properties (\ref{eqn:ajorder}) and (\ref{eqn:covariance}) hold which we may assume is $\sigma$-invariant, else we can replace $\Omega'$ by $\bigcap_{n=0}^\infty \sigma^{-n}(\Omega')$. For each $j\in Z_q$ and for each $\omega\in\Omega'$, the fractional part of $F_\omega(a_j(\omega))$ is $a_{j+p\:\mathrm{mod}\:q}(\sigma\omega)$.
Since $F_\omega$ is strictly increasing, we have $F_\omega(a_{q-p-1}(\omega))<F_\omega(a_{q-p}(\omega))$, and so 
\begin{align*}
\lfloor F_\omega(a_{q-p-1}(\omega)) \rfloor + a_{q-1}(\sigma\omega) & < \lfloor F_\omega(a_{q-p}(\omega)) \rfloor + a_{q-p}(\sigma\omega) \\
& \leq  \lfloor F_\omega(a_{q-p}(\omega)) \rfloor + a_{q-1}(\sigma\omega),
\end{align*}
since $a_{q-p}(\sigma\omega)\leq a_{q-1}(\sigma\omega)$, from which we have 
\begin{align}\label{eqn:floorineq}
\lfloor F_\omega(a_{q-p-1}(\omega)) \rfloor < \lfloor F_\omega(a_{q-p}(\omega)) \rfloor.
\end{align}
For any $x\in [0,1)$, we have $0\leq F_\omega(x)<2$ by Lemma \ref{lem:Fbound}, and so $\lfloor F_\omega(x) \rfloor\in\{0,1\}$. Hence (\ref{eqn:floorineq}) implies
\begin{align}\label{eqn:jump}
\lfloor F_\omega(a_{q-p}(\omega)) \rfloor= \lfloor F_\omega(a_{q-p-1}(\omega)) \rfloor+1.
\end{align}

Since both $F_\omega$ and the floor function are non-decreasing, by (\ref{eqn:jump}) we have that $\lfloor F_{\omega}(a_j(\omega)) \rfloor$  is equal to $0$ if $j\leq q-p-1$ and is equal to $1$ if $j\geq q-p$: that is,
\begin{align}\label{eqn:chiq-p}
F_{\omega}(a_j(\omega))=a_{j+p\:\mathrm{mod}\:q}(\sigma\omega)+ \chi_{\{q-p,\ldots,q-1\}}(j).
\end{align}
In particular, in the case $j=0$, this means
\begin{align}
F_{\omega}(a_0(\omega))=a_{p\:\mathrm{mod}\:q}(\sigma\omega)+ \chi_{\{q-p,\ldots,q-1\}}(0).
\end{align}
Applying $F_{\sigma\omega}$ and using the degree 1 property, by (\ref{eqn:chiq-p}) we have
\begin{align*}
F^{(2)}_\omega(a_0(\omega)) 
&=F_{\sigma\omega}(a_{p\:\mathrm{mod}\:q}(\sigma\omega))+ \chi_{\{q-p,\ldots,q-1\}}(0)\\
&=a_{2p\:\mathrm{mod}\:q}+\chi_{\{q-p,\ldots,q-1\}}(0)+ \chi_{\{q-p,\ldots,q-1\}}(p\:\mathrm{mod}\:q)
\end{align*}
Inductively, for each $n\in\mathbb{N}$ we obtain
\begin{align*}
F_\omega^{(nq)}(a_0(\omega))-a_0(\sigma^{nq}\omega)
&=\sum_{i=0}^{nq-1} \chi_{\{q-p,\ldots,q-1\}}(ip\ \mathrm{mod}\:q)\\
&= \#\{0\leq i\leq nq-1: ip\ \mathrm{mod}\:q\in\{q-p,\ldots,q-1\}\}\\
&= \#\{1\leq i\leq nq: ip\ \mathrm{mod}\:q\in Z_p\}\\
&= n\cdot \#\{1\leq i\leq q: ip\ \mathrm{mod}\:q\in Z_p\}\\
&=np.
\end{align*}
To see the last step, let $d=\textrm{gcd}(p,q)$ and write $p=dp'$ and $q=dq'$. Note that for $1\leq i\leq q$, the quantity $ip\ \mathrm{mod}\:q$ is a multiple of $d$, of the form $jd$ with $j\in\{0,\ldots,q'-1\}$, with each multple of $d$ taken exactly $d$ times. Since the number of multiples of $d$ in $\{0,\ldots, p-1\}$ is $p'$, the cardinality of $\{1\leq i\leq q: ip\ \mathrm{mod}\:q\in Z_p\}$ is therefore $dp'=p$.

Therefore, since $a_0(\omega)-1<0\leq a_0(\omega)$ and $F^{(nq)}_\omega$ is strictly increasing, 
\begin{align*}
a_0(\sigma^{nq}\omega)+np-1<F^{(nq)}_\omega(0)\leq a_0(\sigma^{nq}\omega)+np.
\end{align*}
Thus by (\ref{eqn:randomrotationnumber}) we have that for $\mathbb{P}$-almost every $\omega\in\Omega$,
\begin{align}
\rho_F(\omega)=\lim_{n\to\infty} \frac{F^{(nq)}_\omega(0)}{nq}=\frac{p}{q}. 
\end{align}
\end{proof}

Note that we conclude that $\rho_F$ is essentially constant without assuming that the base dynamics is ergodic. An immediate consequence of Theorem A is a simple condition for the coexistence of random periodic cycles in a RCH.

\begin{cor}
If a RCH has a random $(p,q)$-periodic cycle and a random $(p',q')$-periodic cycle with $p\geq 1$ and $p'\geq 1$, then $p/q=p'/q'$.
\end{cor}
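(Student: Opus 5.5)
The plan is to apply Theorem A to each of the two random periodic cycles separately and then compare the two values of the random rotation number. This works because $\rho_F$ is defined intrinsically from the standard random lift $F$ of the generator $f$, and does not depend on any choice of cycle.

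First, I would make sure the hypotheses of Theorem A hold. By Definition \ref{defn:rpc}, a random $(p,q)$-periodic cycle has $p\in Z_q$, so $p\le q-1<q$. Together with the assumption $p\geq 1$ this gives $1\le p<q$, and in the same way $1\le p'<q'$. (Proposition \ref{prp:ranpqcycle} confirms that this labelling by $p=\tau(0)$ is forced.)

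Next, applying Theorem A to the first cycle gives a full-measure set $\Omega_1$ on which $\rho_F(\omega)=p/q$. Applying it to the second cycle gives a full-measure set $\Omega_2$ on which $\rho_F(\omega)=p'/q'$. Since $\Omega_1\cap\Omega_2$ has full measure, it is nonempty, and for any $\omega$ in it we get $p/q=\rho_F(\omega)=p'/q'$.

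There is no real obstacle here. The only point to check is that both applications of Theorem A use the same lift, namely the standard random lift of $f$. They do, because both cycles belong to the same RCH $(\sigma,f)$.
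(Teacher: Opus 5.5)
Your proposal is correct and follows the paper's proof: you apply Theorem A to both cycles and compare the resulting rotation numbers. The paper compares the mean $\rho(F)$ while you compare $\rho_F(\omega)$ on the intersection of two full-measure sets, which amounts to the same thing, and your explicit check that $p\in Z_q$ forces $1\le p<q$ (so Theorem A applies) is a detail the paper leaves implicit.
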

\begin{proof}
By Theorem A, we have that the mean rotation number is equal to both $p/q$ and $p'/q'$, giving a contradiction unless these values are equal.
\end{proof}

The requirement that $1\leq p<q$ is essential for Theorem A. The existence of a random fixed point does not ensure that the random rotation number of the standard random lift is zero. Indeed, the random rotation number can be non-zero even if the circle homeomorphisms $\{f_\omega\}_{\omega\in\Omega}$ all share a common fixed point, as the following example demonstrates.

\begin{ex}\label{ex:fixedpoint}
Let $(\Omega,\mathcal{F},\mathbb{P},\sigma)$ be the two-sided $(p_0,p_1)$-Bernoulli shift on the set of symbols $Z_1=\{0,1\}$. Let $(\sigma,f)$ be the random Arnold homeomorphism for which the generator $f\in \mathcal{H}(\Omega)$ is defined by equation (\ref{eqn:Arnold}) with the piecewise-constant functions
$\alpha:\Omega\to[-1,1]$ and $\beta:\Omega\to [0,1)$ given by 
\begin{align}
(\alpha(\omega),\beta(\omega)) = \begin{cases}
\left(-\frac{\pi}{5},+\frac{1}{10}\right),\quad\mathrm{if}\ \omega_0=0,\\
\left(+\frac{\pi}{5},+\frac{9}{10}\right),\quad\mathrm{if}\ \omega_0=1. \\
\end{cases}
\end{align}
This random Arnold homeomorphism has a common fixed point at $1/4$: for all $\omega\in\Omega$, $f_\omega(1/4)=1/4$ (see Figure \ref{fig:1/4}). The standard random lift $F\in \tilde{\mathcal{H}}(\Omega)$ of $f$ is given by
\begin{align}\label{eqn:Arnoldlift}
F_\omega(x) = x+\frac{\alpha(\omega)}{2\pi}\sin(2\pi x) + \beta(\omega).
\end{align}
So we have $F_\omega(1/4)=1/4$ if $\omega_0=0$ and $F_\omega(1/4)=5/4$ if $\omega_0=1$. Thus
\begin{align*}
\frac{1}{n}\left(F_\omega^{(n)}(1/4)-1/4\right) & = \frac{1}{n}\#\{i=0,\ldots, n-1: \omega_i=1\} \\
& = \frac{1}{n}\sum_{i=0}^{n-1} \chi_{C^0_1}(\sigma^i\omega)
\end{align*}
which, by Birkhoff's ergodic theorem, converges to $\mathbb{P}(C^0_1)=p_1$ for $\mathbb{P}$-almost every $\omega\in\Omega$. Thus $\rho_F(\omega)=p_1\in(0,1)$ for $\mathbb{P}$-almost every $\omega\in\Omega$ even though $f_\omega$ has a fixed point for each $\omega\in\Omega$.
\end{ex}

\begin{figure}[htb]
\centering
\includegraphics[width=0.5\linewidth]{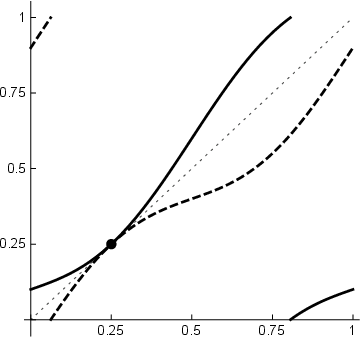}
\caption{Graphs of the fibre map $f_\omega$ when $\omega_0=0$ (solid line) and when $\omega_0=1$ (dashed line).}
\label{fig:1/4}
\end{figure}

The existence of a random fixed point $a_0:\Omega\to \mathbb{S}^1$ may not ensure that the random rotation number is zero, but it supports an invariant probability measure for the associated skew product that can be used to calculate the mean random rotation number. If we take $\mu_\omega$ to be the Dirac measure supported on $a_0(\omega)$ given by $\delta_{a_0(\omega)}(A)=\chi_A(a_0(\omega))$ for $A\in\mathcal{B}(\mathbb{S}^1)$ and $\omega\in\Omega$, then the probability measure $\mu\in\mathcal{P}_{\mathbb{P}}(\Omega\times\mathbb{S}^1)$ defined by equation (\ref{eqn:samplemeasure}) is an invariant measure for the associated skew product $S_f$, and so by equation (\ref{eqn:devint}) the mean random rotation number can be expressed as
\begin{align}
\rho(F)
=\int_{\omega\in\Omega}\int_{\mathbb{S}^1}\mathrm{dev}_F\:\mathrm{d}\delta_{a_0(\omega)}\:\mathrm{d}\mathbb{P}(\omega)
=\int_{\omega\in\Omega}\mathrm{dev}_F(\omega,a_0(\omega))\:\mathrm{d}\mathbb{P}(\omega).
\end{align}

\section{Integer-valued mean random rotation number}

In the deterministic setting, whenever the rotation number of the lift of a circle homeomorphism is an integer, then the homeomorphism has a fixed point. The following result shows that for a RCH, if the mean random rotation number of the standard random lift is an integer, then the homeomorphisms have a fixed point with positive probability.

\begin{thmB}
Let $F\in\tilde{\mathcal{H}}(\Omega)$ be the standard random lift of the generator $f\in \mathcal{H}(\Omega)$ of a random circle homeomorphism over a measure-preserving dynamical system $(\Omega,\mathcal{F},\mathbb{P},\sigma)$. If the mean random rotation number satisfies $\rho(F)\in \mathbb{Z}$, then 
\begin{align}
\mathbb{P}(\{\omega\in\Omega: f_\omega\ \mathrm{has\ a\ fixed\ point}\})>0.
\end{align}
\end{thmB}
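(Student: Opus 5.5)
By Proposition \ref{prp:rhobound}, $\rho(F)\in[0,1]$. The assumption $\rho(F)\in\mathbb{Z}$ therefore leaves only two cases, $\rho(F)=0$ and $\rho(F)=1$. The plan is to argue by contradiction in each case, using the bounds $\int_\Omega m\,\mathrm{d}\mathbb{P}\leq\rho(F)\leq\int_\Omega M\,\mathrm{d}\mathbb{P}$ from the same proposition. Suppose that the set $A=\{\omega: f_\omega \text{ has a fixed point}\}$ has $\mathbb{P}(A)=0$; this set is measurable, because $f_\omega$ has a fixed point exactly when $\inf_x \mathrm{dist}(f_\omega(x),x)=0$, and Lemma \ref{lem:measinfsup} applies to that infimum. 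If $f_\omega$ has no fixed point, then for every $x\in[0,1)$ the quantity $\mathrm{Dev}_F(\omega,x)=F_\omega(x)-x$ is never an integer. Lemma \ref{lem:Fbound} gives $-1<F_\omega(x)-x<2$ for $x\in[0,1)$, and by continuity and periodicity $\mathrm{dev}_F(\omega,\cdot)$ takes values in a single interval $(k,k+1)$ with $k\in\{-1,0,1\}$.

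The key step is to pin this interval down to $(0,1)$. Evaluate at $x=0$: $\mathrm{Dev}_F(\omega,0)=F_\omega(0)\in[0,1)$, and it is not an integer, so it lies in $(0,1)$. Hence $k=0$, and $0<\mathrm{dev}_F(\omega,x)<1$ for all $x$. Compactness of $\mathbb{S}^1$ and continuity of $\mathrm{dev}_F(\omega,\cdot)$ then give $m(\omega)>0$ and $M(\omega)<1$ for $\mathbb{P}$-almost every $\omega$.

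Integrating these inequalities, a measurable function that is strictly positive almost everywhere has strictly positive integral, so $\int m\,\mathrm{d}\mathbb{P}>0$. Similarly $\int M\,\mathrm{d}\mathbb{P}<1$. Proposition \ref{prp:rhobound} then yields $0<\int m\leq\rho(F)\leq\int M<1$, which contradicts $\rho(F)\in\{0,1\}$. Therefore $\mathbb{P}(A)>0$.

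The main point needing care is the measurability of $A$, or equivalently of $m$ and $M$. For $m$ and $M$ this is already supplied by Lemma \ref{lem:measinfsup}, since $\mathrm{dev}_F(\omega,\cdot)$ is continuous on $\mathbb{S}^1$. For $A$ itself one can avoid the separate argument entirely: by the discussion above, the complement of $A$ is contained in $\{0<m\}\cap\{M<1\}$, which is measurable. So if the theorem is read in terms of outer measure, it suffices to show that $\{m\leq 0\}\cup\{M\geq 1\}$ has positive measure. Apart from this, the only subtlety is to remember that the standard-lift normalisation $F_\omega(0)\in[0,1)$ is exactly what excludes the intervals $(-1,0)$ and $(1,2)$.
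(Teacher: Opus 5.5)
Your proposal is correct and follows essentially the same route as the paper's proof. Both argue by contradiction: the standard-lift normalisation $F_\omega(0)\in[0,1)$ together with continuity gives $0<\mathrm{dev}_F(\omega,\cdot)<1$ almost surely, so $m>0$ and $M<1$, and Proposition~\ref{prp:rhobound} then forces $\rho(F)\in(0,1)$. The differences are minor. The paper makes $\int m\,\mathrm{d}\mathbb{P}>0$ explicit through the sets $A_n=\{m\geq 1/n\}$ and does not address measurability of the fixed-point set, whereas you settle that measurability via Lemma~\ref{lem:measinfsup}.
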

\begin{proof}
Let $\Omega'=\{\omega\in \Omega: f \textrm{ has a fixed point}\}$ and suppose for a contradiction that $\mathbb{P}(\Omega')=0$. Then the complement $\Omega^0=\Omega\backslash \Omega'$ satisfies $\mathbb{P}(\Omega^0)=1$. Thus, for each $\omega \in \Omega^0$, the homeomorphism $f_\omega$ has no fixed point. As $F_\omega$ is the standard lift of $f_\omega$, which has no fixed points, we obtain $\mathrm{Dev}_F(\omega,0)\in (0,1)$. By continuity, we have $\mathrm{Dev}_F(\omega,x)\in (0,1)$ for all $x\in \mathbb{R}$: otherwise $\mathrm{Dev}_F(\omega,x)\in \mathbb{Z}$ for some $x\in\mathbb{R}$ by the intermediate value theorem, in which case $\pi(x)\in \mathbb{S}^1$ would be a fixed point of $f_\omega$, which is a contradiction.

Projecting the deviation to the circle, we thus have 
\begin{align}
0<\mathrm{dev}_F(\omega,x)<1
\end{align}
for all $x\in \mathbb{S}^1$ and all $\omega \in \Omega^0$. 
Since $x\mapsto\mathrm{dev}_F(\omega,x)$ is continuous, for each $\omega\in\Omega^0$ we have $m(\omega)=\inf_{x\in\mathbb{S}^1}\mathrm{dev}_F(\omega,x)>0$ and $M(\omega)=\sup_{x\in\mathbb{S}^1}\mathrm{dev}_F(\omega,x)<1$. By Lemma \ref{lem:measinfsup}, $m$ and $M$ are measurable.

Since $m>0$ on $\Omega^0$, we can write $\Omega^0$ as a union $\bigcup_{n\in \mathbb{N}} A_n$ of measurable sets, where $A_n=\{\omega \in \Omega^0: m(\omega)\geq \frac{1}{n}\}$. Since $\mathbb{P}(\Omega^0)=1$, there exist some $n\in\mathbb{N}$ for which $\mathbb{P}(A_n)>0$. Thus, by Proposition \ref{prp:rhobound}, we have
\begin{align}
\rho(F)
\geq \int_{\Omega} m\:\mathrm{d}\mathbb{P}
= \int_{\Omega^0} m\:\mathrm{d}\mathbb{P}
\geq \int_{A_n} m\:\mathrm{d}\mathbb{P}
\geq \int_{A_n} \frac{1}{n}\:\mathrm{d}\mathbb{P}
=\frac{\mathbb{P}(A_n)}{n}>0.
\end{align}
By similar reasoning, we have
\begin{align}
\rho(F)\leq \int_{\Omega^0}M\:\mathrm{d}\mathbb{P}<1, 
\end{align}
hence $\rho(F)\in (0,1)$, which contradicts with the assumption $\rho(F)\in \mathbb{Z}$ and therefore we conclude $\mathbb{P}(\Omega')>0$.
\end{proof}

\begin{rem}
Recall that for the standard random lift $F$, the mean random rotation number satisfies $\rho(F) \in [0,1]$ by Proposition \ref{prp:rhobound}. So the condition $\rho(F) \in \mathbb{Z}$ reduces to $\rho(F) \in \{0,1\}$.
\end{rem}

When the mean random rotation number is an integer, it is possible that the homeomorphisms also have no fixed point with positive probability, as the following example demonstrates.

\begin{ex}\label{ex:fixedpointhalf}
Let $\Omega$ be the two-point set $Z_1=\{0,1\}$ and consider $(\Omega,\mathcal{F},\mathbb{P},\sigma)$, where $\sigma$ is the transposition $(01)$ and $\mathbb{P}=(\delta_0+\delta_1)/2$ is the unique $\sigma$-invariant probability measure on $\Omega$. Let $(\sigma,f)$ be the random Arnold homeomorphism for which the generator $f\in \mathcal{H}(\Omega)$ is defined by equation (\ref{eqn:Arnold}) with the piecewise-constant functions $\alpha:\Omega\to[-1,1]$ and $\beta:\Omega\to [0,1)$ given by 
\begin{align}
(\alpha(\omega),\beta(\omega)) = \begin{cases}
\left(0,\frac{1}{10}\right),\quad\mathrm{if}\ \omega=0,\\
\left(-\frac{\pi}{5},0\right),\quad\mathrm{if}\ \omega=1. \\
\end{cases}
\end{align}
Note that for each $n\in\mathbb{N}$, we have $F^{(2n)}_1(0.15)=0.15$ and $F^{(2n)}_2(0.25)=0.25$ (see Figure \ref{fig:0.15}). So $\rho_F(\omega)=0$ for all $\omega\in\Omega$. Since $f_0$ is the rigid rotation by $1/10$, which has no fixed points, and $f_1$ has a fixed point at $0$, we have $\mathbb{P}(\{\omega\in \Omega: f_\omega \mathrm{\ has\ a\ fixed\ point}\})=1/2$.
\end{ex}

\begin{figure}[htb]
\centering
\includegraphics[width=0.5\linewidth]{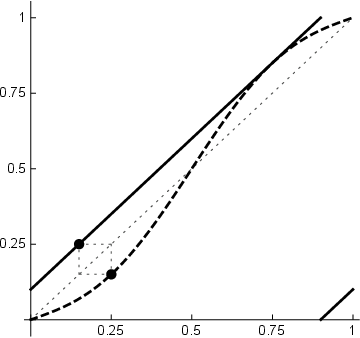}
\caption{Graphs of the fibre map $f_\omega$ when $\omega=1$ (solid line) and when $\omega=2$ (dashed line).}
\label{fig:0.15}
\end{figure}

It is straightforward to demonstrate no other values of the mean random rotation number ensure that periodic points exist with positive probability.

\begin{ex}\label{ex:periodic0}
Let $(\Omega,\mathcal{F},\mathbb{P},\sigma)$ be the two-sided $(\frac12,\frac12)$-Bernoulli shift on the set of symbols $Z_1=\{0,1\}$. Given $a\in (0,1)$, choose $b\in (0,\min\{a,1-a\})$ for which $a+b$ and $a-b$ are irrational.
We construct the generator $f\in \mathcal{H}(\Omega)$ of the RCH $(\sigma,f)$, by setting $f_\omega=g_{\omega_0}$, where
$g_0(x)=x+a-b\:\mathrm{mod}\:1$ and $g_1(x)=x+a+b\:\mathrm{mod}\:1$. Let $F\in\tilde{\mathcal{H}}(\Omega)$ denote the standard random lift of $f$. Then for each $n\in\mathbb{N}$, we have
\begin{align*}
\frac{1}{n}F_\omega^{(n)}(0) 
& = \frac{1}{n}\sum_{i=0}^{n-1} (a-b)\chi_{C^0_0}(\sigma^i\omega) + \frac{1}{n}\sum_{i=0}^{n-1} (a+b)\chi_{C^0_1}(\sigma^i\omega)\\
& = a-\frac{1}{n}\sum_{i=0}^{n-1} b\chi_{C^0_0}(\sigma^i\omega) + \frac{1}{n}\sum_{i=0}^{n-1} b\chi_{C^0_1}(\sigma^i\omega)
\end{align*}
which, by Birkhoff's ergodic theorem, converges for $\mathbb{P}$-almost every $\omega\in\Omega$ to $a$. However, as $a\pm b$ is irrational, we have that each map $f_\omega$ is an irrational rotation and so
\begin{align*}
\mathbb{P}(\{\omega\in\Omega: f_\omega\ \mathrm{has\ a\ periodic\ point}\})=0.
\end{align*}
\end{ex}

\end{document}